\newtheorem{thm}{Theorem}
\newtheorem{theorem}[thm]{Theorem}
\newtheorem{prop}{Proposition}
\def\({\left(}
\def\){\right)}
\def\[{\left[}
\def\]{\right]}
\def\<{\langle}
\def\>{\rangle}
\begin{document}
%%================================head--document=======================

\def\evenhead{{\protect\centerline{\textsl{\large{K. Soundararajan}}}\hfill}}

\def\oddhead{{\protect\centerline{\textsl{\large{
An asymptotic expansion related to the Dickman function}}}\hfill}}

\pagestyle{myheadings} \markboth{\evenhead}{\oddhead}

\begin{center}
\Large {\bf An asymptotic expansion related to the Dickman function }
\end{center}
\bigskip

\begin{center}
\large   K. Soundararajan
\end{center}

\let\thefootnote\relax\footnotetext{ 
The author was supported in part by NSF grant DMS-0500711.}

\medskip
\begin{center}
\begin{minipage}{4.5in}
%\noindent {\bf Abstract:}  
 %\newline
%\newline
%{\bf Keywords:} Dickman function 
\end{minipage}
\end{center}

%{\renewcommand{\thefootnote}{} \vskip -25pt \footnote{ \noindent } }
%\def\\{\cr}

\bigskip
%\section{Introduction}
  
  In a recent paper Broadhurst \cite{B} considered a generalized class of polylogarithms 
related to the Dickman function.  In exploring these polylogarithms he was 
led to define a sequence of constants $C_k$ which he called the Dickman constants.  
Based on numerical computations, Broadhurst conjectured that the generating 
function $\sum_{k=0}^{\infty} C_k z^k$ equals 
$\exp(\gamma z)/\Gamma(1-z)$, 
so that the constants $C_k$ are related to values of the Riemann  zeta-function at 
integers.   In this note we establish Broadhurst's conjecture.

First we recall that the Dickman function $\rho:\ [0,\infty) \to [0,1]$ is defined by 
$\rho(u) = 1$ for $0\le u\le 1$, and for $u\ge 1$ is given by the unique solution 
to the differential difference equation $u\rho^{\prime}(u) = -\rho(u-1)$.   The Dickman 
function arises naturally in number theory as follows:  the number of 
integers below $x$ all of whose prime factors are below $x^{1/u}$ is 
asymptotic to $\rho(u) x$ as $x\to \infty$.  It is not hard to show that 
$$ 
\rho(u) = \sum_{k=0}^{\infty} \frac{(-1)^{k}}{k!} I_k(u), 
$$ 
where 
$$ 
I_k(u) = \int_{{t_1, \ldots t_k \ge 1}\atop{ t_1+\ldots+t_k \le u}} \frac{dt_1}{t_1} \cdots 
\frac{dt_k}{t_k}. 
$$ 
Note that $I_0(u)=1$ for all $u$, $I_1(u)=0$ for $0\le u\le 1$ and $I_1(u)= \log u$ for 
$1\le u$, and so on.  The sum over $k$ in the formula for $\rho(u)$ is 
in fact a finite sum since $I_k(u)=0$ if $k\ge u$.

Broadhurst uses a different formulation.  He sets $F(\alpha) =\rho(1/\alpha)$ 
which then satisfies $F(\alpha)=1$ for $\alpha\ge 1$, and for $0<\alpha <1$ 
satisfies the differential equation 
$$
F^{\prime}(\alpha) =  \frac{1}{\alpha} F\Big(\frac{\alpha}{1-\alpha}\Big). 
$$ 
He writes $F(\alpha) =\sum_{k=0}^{\infty} L_k(\alpha)$ 
where $L_0(\alpha)=1$ for all $\alpha$, and the ``Dickman polylogarithm" $L_k$ is defined recursively 
by $L_k(t) =0$ for $t\ge 1/k$ and for $t<1/k$ 
$$ 
L_k(t) = -\int_{t}^{1/k} L_{k-1} \Big(\frac{x}{1-x}\Big) \frac{dx}{x}.
$$ 
We may check easily that 
$$ 
L_k(\alpha) = \frac{(-1)^k}{k!} I_k(1/\alpha). 
$$ 
Broadhurst conjectures that there exists a sequence of constants $C_k$ 
(for integers $k\ge 0$) such that, as $t \to 0$ we have 
$$ 
L_k(t) = \sum_{j=0}^{k} {C_j} \frac{(\log t)^{k-j}}{(k-j)!} +o(1). 
$$ 
Equivalently, in terms of $I_k(u)$, Broadhurst's conjecture reads, as $u\to \infty$, 
$$ 
I_k(u) = \sum_{j=0}^k \binom{k}{j} D_j (\log u)^{k-j} +o(1),
$$ 
with $D_j = (-1)^j j! C_j$.  
He found that $C_0=1$, $C_1=0$, $C_2=-\pi^2/12$, $C_3=-\zeta(3)/3$, 
and formulated the elegant conjecture that 
$$ 
\sum_{k=0}^{\infty} C_k z^k = \frac{e^{\gamma z}}{\Gamma(1-z)}. 
$$ 
We establish this conjecture below.

\begin{prop}  For natural numbers $k\ge 0$ define the 
constants 
$$ 
C_k = \frac{1}{k!} \frac{1}{2\pi i } \int_{c-i\infty}^{c+i\infty} \frac{e^s}{s} (\log s +\gamma)^k ds, 
$$ 
where $c>0$ and the integral converges conditionally and is interpreted as 
$\lim_{T\to \infty} \int_{c-iT}^{c+iT}$.   We have, for all complex $z$,  
$$ 
\sum_{k=0}^{\infty} C_k z^k = \frac{e^{\gamma z}}{\Gamma(1-z)}.  
$$ 
\end{prop}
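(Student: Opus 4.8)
The plan is to evaluate the generating function by bringing the sum inside the defining integral and recognising a Hankel-type integral for $1/\Gamma$. Formally,
\begin{align*}
\sum_{k=0}^{\infty} C_k z^k
&= \frac{1}{2\pi i}\int_{c-i\infty}^{c+i\infty}\frac{e^s}{s}\sum_{k=0}^{\infty}\frac{(z(\log s+\gamma))^k}{k!}\, ds \\
&= \frac{1}{2\pi i}\int_{c-i\infty}^{c+i\infty}\frac{e^s}{s}\,e^{z(\log s+\gamma)}\, ds
= e^{\gamma z}\,\frac{1}{2\pi i}\int_{c-i\infty}^{c+i\infty} e^{s}\, s^{z-1}\, ds,
\end{align*}
so everything reduces to the identity $\tfrac{1}{2\pi i}\int_{(c)} e^{s} s^{z-1}\, ds = 1/\Gamma(1-z)$ together with a justification of the interchange, the delicate point being the conditional convergence of the line integral.

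For the $\Gamma$-integral I would recall that for $c>0$ and $\Re w>1$ one has $\tfrac{1}{2\pi i}\int_{(c)} e^{s} s^{-w}\, ds = 1/\Gamma(w)$: the integral then converges absolutely and one may collapse the line onto the classical Hankel loop around $(-\infty,0]$, the connecting arcs contributing $O(R^{1-\Re w})$, which vanishes since $\Re w>1$. For $0<\Re w\le 1$ the line integral converges only conditionally, but a single integration by parts in the tail shows the convergence is locally uniform in $w$, so the left-hand side is holomorphic on $\Re w>0$ and therefore agrees with $1/\Gamma(w)$ there by analytic continuation. Taking $w=1-z$ gives the required identity for $\Re z<1$.

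To justify the interchange I would truncate the contour. For finite $T$ one has the exact identity
$$ \sum_{k=0}^{\infty}\frac{z^k}{k!}\,J_k(T) = \int_{c-iT}^{c+iT}\frac{e^{s}}{s}\,e^{z(\log s+\gamma)}\, ds, \qquad J_k(T):=\int_{c-iT}^{c+iT}\frac{e^{s}}{s}(\log s+\gamma)^k\, ds, $$
since on the compact segment the exponential series converges uniformly. As $T\to\infty$ the right-hand side tends to $2\pi i\,e^{\gamma z}/\Gamma(1-z)$ (for $\Re z<1$, by the previous paragraph after extracting $e^{\gamma z}$), while $J_k(T)\to 2\pi i\,k!\,C_k$ by the definition of $C_k$. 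To pass the limit through the $k$-sum I need a bound on $J_k(T)$ that is uniform in $T$ and summable against $|z|^k/k!$. Writing $s=c+iy$ and integrating by parts once, playing the oscillation of $e^{iy}$ against the slowly varying factor $(\log(c+iy)+\gamma)^k/(c+iy)$, should give $\sup_{T}|J_k(T)|\ll (k+1)!$; since $\sum_k (k+1)\,|z|^k<\infty$ for $|z|<1$, dominated convergence then yields $\sum_k C_k z^k = e^{\gamma z}/\Gamma(1-z)$ on the disc $|z|<1$. Because the Taylor series about $0$ of the entire function $e^{\gamma z}/\Gamma(1-z)$ agrees there with $\sum_k C_k z^k$, the two power series have identical coefficients, and the identity holds for all complex $z$.

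The main obstacle is this last $T$-uniform estimate: controlling a conditionally convergent oscillatory integral uniformly in the truncation height, with a $k$-dependence mild enough to be summed against $|z|^k/k!$. A variant that sidesteps it is to expand $e^{z(\log t+\gamma)}$ on a \emph{fixed-radius} Hankel loop $\mathcal{H}$ instead, where the expansion converges absolutely so termwise integration is immediate, giving $e^{\gamma z}/\Gamma(1-z)=\sum_k \tfrac{z^k}{k!}\cdot\tfrac{1}{2\pi i}\int_{\mathcal{H}}\tfrac{e^{t}}{t}(\log t+\gamma)^k\, dt$, and then for each fixed $k$ to deform $\mathcal{H}$ out to the line $\Re s=c$; the deformation is legitimate because $\tfrac{e^{s}}{s}(\log s+\gamma)^k$ is holomorphic off $(-\infty,0]$ and the connecting arcs vanish (super-exponential decay of $e^{s}$ where $\Re s\to-\infty$, and a Jordan-type bound near the imaginary axis, where the integrand is $O(|s|^{-1}(\log|s|)^k)$). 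Either way the crux is to reconcile the conditionally convergent line integral with an absolutely convergent loop integral.
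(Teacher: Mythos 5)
Your proposal is correct and follows essentially the same route as the paper: a single integration by parts to tame the conditionally convergent integral (yielding the uniform bound that makes $|C_k|=O(k+1)$, hence termwise summation for $|z|<1$), Hankel's contour integral for $1/\Gamma$, and analytic continuation using that $e^{\gamma z}/\Gamma(1-z)$ is entire. The paper simply performs the integration by parts once at the level of the definition of $C_k$, obtaining an absolutely convergent representation, rather than on truncations $J_k(T)$, but this is only a reorganization of the same argument.
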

\begin{proof}  First we rewrite the definition of $C_k$ by integrating by parts.  Thus 
\begin{equation}
\label{alt}
C_k = \frac{1}{k! } \frac{1}{2\pi i} \int_{c-i\infty}^{c+i\infty} e^{s} \Big( 
\frac{(\log s+\gamma)^k}{s^2} -\frac{k(\log s+\gamma)^{k-1}}{s^2} \Big) ds.  
\end{equation}
This integral is now absolutely convergent.  Moreover, choosing $c=1$, 
we obtain the bound 
$$ 
|C_k| \le \frac{1}{2\pi k!} \int_{-\infty}^{\infty} \frac{e (1+k) (\log (1+|t|) + \gamma +\pi/2)^k}{1+t^2 } dt 
\le C (k+1) 
$$
for some constant $C$.  Thus the series $\sum_{k=0}^{\infty} C_k z^k$ 
converges absolutely for $|z|<1$ and defines in that region an analytic function.   

Using our expression \eqref{alt} we obtain that for $|z|<1$  
$$
\sum_{k=0}^{\infty} C_k z^k 
= \frac{1}{2\pi i} \int_{c-i\infty}^{c+i\infty} e^s  \frac{e^{\gamma z}s^z}{s^2}(1-z) ds  = \frac{e^{\gamma z}(1-z)}{\Gamma(2-z)} =\frac{e^{\gamma z}}{\Gamma(1-z)}.
$$  
The relation connecting the contour integral to $1/\Gamma(2-z)$ may be derived 
by deforming the line of integration to a contour traveling just above the real 
axis from $-\infty$ to zero, taking a detour around zero, and then traveling back to $-\infty$ 
just below the real axis.  The resulting integral is Hankel's contour integral for 
the reciprocal of the $\Gamma$-function; see for example Chapter IX of 
\cite{C}, and in particular the ``miscellaneous example" 24 which attributes our formula above to Laplace.   

Since $e^{\gamma z}/\Gamma(1-z)$ is analytic for all $z\in {\Bbb C}$, we 
conclude that the series $\sum_{k=0}^{\infty} C_k z^k$ converges absolutely 
for all $z$ and equals $e^{\gamma z}/\Gamma(1-z)$.  
\end{proof}

\begin{theorem}  Setting $D_j = (-1)^j j! C_j$ we have as $u\to \infty$ 
$$ 
I_k(u) = \sum_{j=0}^{k} \binom{k}{j} D_j (\log u)^{k-j}  +O\Big(\frac{(\log u)^k}{u}\Big)
 $$ 
and, equivalently 
$$ 
L_k(t) = \sum_{j=0}^{k} C_j \frac{(\log t)^{k-j}}{(k-j)!} + O(t(|\log t|)^k). 
$$ 
\end{theorem}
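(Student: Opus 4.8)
\emph{Plan.}  The strategy is to compute the Laplace transform of $I_k$ in closed form, invert it, and — after the substitution $s\mapsto w/u$ — read off the constants $C_j$ from the resulting contour integral.  By Fubini's theorem, for $\mathrm{Re}\,s>0$,
$$
\int_0^\infty I_k(u)e^{-su}\,du=\frac1s\Big(\int_1^\infty e^{-st}\,\frac{dt}{t}\Big)^{\!k}=\frac{E_1(s)^k}{s},\qquad E_1(s):=\int_s^\infty\frac{e^{-v}}{v}\,dv .
$$
Since $I_k$ is continuous and of polynomial growth and $E_1(c+it)=O(1/|t|)$ on vertical lines, Laplace inversion gives, for any $c>0$ and all $u$ large (so that $u>k$),
$$
I_k(u)=\frac1{2\pi i}\int_{c-i\infty}^{c+i\infty}\frac{E_1(s)^k}{s}\,e^{us}\,ds ,
$$
the integral being absolutely convergent for $k\ge1$; the case $k=0$ is trivial since $I_0\equiv1=D_0$.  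Using $E_1(z)\sim e^{-z}/z$, which for $u>k$ makes $|E_1(s)^k e^{us}|\ll|s|^{-k}e^{-(u-k)|\mathrm{Re}\,s|}$ throughout $\mathrm{Re}\,s<0$, I would deform the line of integration to the Hankel-type contour $\mathcal H$ that comes in from $-\infty$ just below the negative real axis, circles the origin at distance $1$, and returns to $-\infty$ just above it — precisely the deformation used in the proof of the Proposition.  Substituting $s=w/u$ and rescaling the loop back to unit radius (legitimate, the integrand being analytic off $(-\infty,0]$) yields
$$
I_k(u)=\frac1{2\pi i}\int_{\mathcal H}\frac{E_1(w/u)^k}{w}\,e^w\,dw .
$$
Passing to $\mathcal H$ \emph{before} substituting is essential: along the two rays of $\mathcal H$ the factor $e^w$ decays exponentially, whereas on a vertical line the piece with $|w|\gg u$ would contribute an amount of order $1$, far exceeding the error we are after.

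Now insert $E_1(w/u)=\log u+A(w)+R(w/u)$, where $A(w)=-\log w-\gamma$ and
$$
R(z):=E_1(z)+\log z+\gamma=\int_0^z\frac{1-e^{-v}}{v}\,dv
$$
is entire with $R(z)=z+O(z^2)$ near the origin.  The part with $R\equiv0$ produces the main term: by the identity established inside the proof of the Proposition — that for each $j$ the line integral $\frac1{2\pi i}\int_{(c)}\frac{(\log s+\gamma)^j}{s}e^s\,ds$ equals the corresponding integral over $\mathcal H$ and equals $j!\,C_j$ — one obtains
$$
\frac1{2\pi i}\int_{\mathcal H}\frac{(\log u+A(w))^k}{w}\,e^w\,dw=\sum_{j=0}^k\binom kj(\log u)^{k-j}(-1)^j j!\,C_j=\sum_{j=0}^k\binom kj D_j(\log u)^{k-j}.
$$

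The crux, and the step I expect to be the main obstacle, is bounding the remaining error by $O((\log u)^k/u)$.  I would split $\mathcal H=\mathcal H^{>}\cup\mathcal H^{\le}$ according as $|w|>\sqrt u$ or $|w|\le\sqrt u$.  On $\mathcal H^{>}$ the factor $e^{-|w|}\le e^{-\sqrt u}$ overwhelms everything in sight — even the crude ray bound $|E_1(w/u)|\ll\log|w|+\log u+e^{|w|/u}$ — so both $\int_{\mathcal H^{>}}E_1(w/u)^k e^w w^{-1}\,dw$ and the matching tail of the main-term integral are $O(u^{-A})$ for every $A$.  On $\mathcal H^{\le}$ one has $|R(w/u)|\ll|w|/u$ and, crucially, $|R(w/u)-w/u|\ll|w|^2/u^2\ll u^{-1}$.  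Expanding $E_1(w/u)^k$ by the binomial theorem, subtracting $(\log u+A(w))^k$, and pulling out the single term $k(\log u+A(w))^{k-1}(w/u)$, each surviving summand has the shape $(\log u+A(w))^{k-\ell}\,w^{m}u^{-m'}$ with $m\le m'$ and $m'\ge2$.  The decisive observation is that the extra powers of $w$ cost nothing: since
$$
\int_{\mathcal H}|w|^{m}\,e^{\mathrm{Re}\,w}\,(\log u+\log|w|)^{k-\ell}\,|dw|=O\big((\log u)^{k-\ell}\big)
$$
for every fixed $m$ (the exponential decay along the rays swallows both $|w|^m$ and the region $|w|>u$ where $\log|w|$ exceeds $\log u$), the pulled-out term contributes $O((\log u)^{k-1}/u)$ while every other summand contributes $O((\log u)^{k}/u^{2})$ or less.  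Summing, the error is $O((\log u)^k/u)$, which proves the asymptotic for $I_k(u)$.

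Finally, the statement for $L_k$ follows formally.  Writing $t=1/u$ and using $L_k(\alpha)=\frac{(-1)^k}{k!}I_k(1/\alpha)$ together with $D_j=(-1)^j j!\,C_j$, the coefficient $\frac{(-1)^k}{k!}\binom kj D_j(-1)^{k-j}$ collapses to $C_j/(k-j)!$, while the error becomes $\frac1{k!}O\big(t|\log t|^k\big)=O\big(t|\log t|^k\big)$; this gives $L_k(t)=\sum_{j=0}^k C_j\frac{(\log t)^{k-j}}{(k-j)!}+O\big(t|\log t|^k\big)$, as claimed.
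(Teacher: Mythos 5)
Your proof is correct, and it shares the paper's two essential ingredients --- the representation $I_k(u)=\frac{1}{2\pi i}\int\frac{e^s}{s}E_1(s/u)^k\,ds$ (you reach it by Laplace inversion of $E_1(s)^k/s$, the paper by applying Perron's formula inside the defining multiple integral) and the splitting $E_1(z)=-\log z-\gamma+R(z)$ with $R(z)=\int_0^z\frac{1-e^{-v}}{v}\,dv$, followed by the binomial theorem and the recognition of the $R$-free part as the defining integrals for the $C_j$ --- but it handles the error term by a genuinely different device. The paper never leaves the vertical line $\mathrm{Re}\,s=1$: since the individual terms of the expansion are only conditionally convergent there, it integrates by parts once and then estimates $G(u,s)=R(s/u)$ and $\frac{d}{ds}G(u,s)$ pointwise, coping with the fact that $|G(u,s)|\ll\log(1+|s|/u)$ grows with $|s|$. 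You deform to a Hankel contour first, which buys you exponential decay of $e^w$ along the rays: the region $|w|>\sqrt u$ becomes negligible for trivial reasons, the bound $|R(w/u)|\ll|w|/u$ disposes of the rest, no integration by parts is needed, and your bookkeeping in fact yields the slightly stronger error $O((\log u)^{k-1}/u)$ for $k\ge1$. What this costs you is the justification of the deformation itself, which requires the growth of $E_1$ throughout the left half-plane and the restriction $u>k$, neither of which the paper has to mention; you should also be a little careful in citing the Hankel-contour evaluation $\frac{1}{2\pi i}\int_{\mathcal H}\frac{e^w}{w}(\log w+\gamma)^j\,dw=j!\,C_j$ as ``established inside the proof of the Proposition,'' since that proof performs the deformation only on the absolutely convergent, integrated-by-parts form of the defining integral --- a harmless but nonzero amount of extra checking on your side. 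The reduction of the $I_k$ asymptotic to the statement for $L_k$ is identical in both treatments.
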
 

\begin{proof}
Recall Perron's formula which gives, for any $c>0$,  that 
$$ 
\frac{1}{2\pi i} \int_{c-i\infty}^{c+i\infty} \frac{e^{\lambda s}}{s} ds = \begin{cases}
1&\text{if  } \lambda >0 \\ 
0&\text{if  }\lambda < 0,\\
\end{cases}
$$ 
and for completeness we note that the integral is $1/2$ if $\lambda =0$.  Note that
 the integral above is only conditionally convergent and should be understood as 
 $\lim_{T\to \infty} \int_{c-iT}^{c+iT}$.   Therefore we find that 
 $$ 
 I_k(u) = \frac{1}{2\pi i } \int_{c-i\infty}^{c+i\infty} \frac{e^{s}}{s} \int_{{t_1, \ldots, t_k \ge 1}} 
 \frac{e^{-(t_1+\ldots+t_k)s/u}}{t_1\cdots t_k} dt_1 \cdots dt_k \ ds.  
 $$
 Making a change of variables in the inner integrals we obtain 
 that 
 \begin{equation}
 \label{I1} 
 I_k(u) = \frac{1}{2\pi i} \int_{c-i\infty}^{c+i\infty} \frac{e^s}{s} \Big(\int_{1/u}^{\infty} 
 \frac{e^{-ts}}{t} dt \Big)^k ds.
 \end{equation} 
 
 Now observe that 
 $$ 
 \int_{1/u}^{\infty} \frac{e^{-ts}}{t}dt 
 = -\int_0^{1} \frac{1-e^{-ts}}{t} dt +\int_{1}^{\infty} \frac{e^{-ts}}{t} dt 
 +\int_0^{1/u} \frac{1-e^{-ts}}{t} dt + \log u. 
 $$ 
 For $s$ with positive real part we may see that 
 $$ 
 \int_0^1 \frac{1-e^{-ts}}{t} dt -\int_1^{\infty} \frac{e^{-ts}}{t}dt =\log s +\gamma, 
 $$ 
 and thus we conclude that 
 \begin{equation} 
 \label{formula} 
 \int_{1/u}^{\infty} \frac{e^{-ts}}{t} dt 
 = \log u -\log s -\gamma + \int_0^{1/u} \frac{1-e^{-ts}}{t} dt. 
 \end{equation}
 
Write $G(u,s) = \int_0^{1/u} (1-e^{-ts})/t dt$.   Using \eqref{formula} in \eqref{I1} we find that 
 \begin{equation}
 \label{I2}
 I_k(u) = \sum_{j=0}^{k} \binom{k}{j} 
 \frac{1}{2\pi i} \int_{c-i\infty}^{c+i\infty} \frac{e^s}{s} G(u,s)^j (\log u -\log s -\gamma)^{k-j} ds. 
 \end{equation}
%We show that the term $j=0$ gives the desired asymptotic for $I_k(u)$ 
 %as a polynomial of degree $k$ in $\log u$ while the terms 
 %$j\neq 0$ contribute remainder terms of size $O(1/u)$.  
Using the binomial theorem and our definition of the 
constants $C_j$ in Proposition 1 we see that the term $j=0$ equals 
\begin{equation}
\label{j} 
\frac{1}{2\pi i} \int_{c-i\infty}^{c+i\infty} \frac{e^s}{s} (\log u -\log s -\gamma)^k ds =
\sum_{\ell =0}^{k} \binom{k}{\ell} D_\ell (\log u)^{k-\ell}.
\end{equation} 

It remains finally to estimate the terms with $j\neq 0$.  Integration by parts shows that 
the $j$-th term is 
$$ 
- \frac{1}{2\pi i} \int_{c-i\infty}^{c+i\infty} 
e^s \frac{d}{ds} \Big( \frac{G(u,s)^j}{s}(\log u -\log s-\gamma)^{k-j}\Big) ds.   
$$ 
Since, for Re$(s)>0$,  
$$ 
|G(u,s)| =O\Big( \int_0^{1/u} \min \Big(|s|,\frac{1}{t}\Big) dt \Big)= O( \log (1+|s|/u)),
$$  
and 
$$
\Big| \frac{d}{ds} G(u,s)\Big| =\frac{ |1-e^{-s/u}|}{|s|} = O\Big( \min\Big(\frac{1}{|s|}, \frac 1u\Big)\Big), 
$$ 
we may bound our integrand above by (choosing $c=1$ and writing $s=c+it$)
\begin{eqnarray*}
\frac{(1+\log u +\log (1+|t|))^{k-j}}{1+|t|} 
&&\Big(\log (1+(1+|t|)/u))^{j-1} \min\Big(
\frac{1}{1+|t|},\frac 1u\Big) \\
&& + \frac{(\log (1+(1+|t|)/u))^j}{1+|t|}    \Big).
\\
\end{eqnarray*}
Integrating this over $t$ from $-\infty$ to $\infty$ we conclude that the $j$-th 
term is $O((\log u)^k/u)$ as desired.  This proves our Theorem. 
\end{proof}

We take this opportunity to make an historical observation on the 
Dickman function.  While Dickman's paper \cite{D} appears to be the first 
published account of the $\rho$-function, during the Focused Week on Quadratic Forms 
and Theta Functions held in March 2010 at the University of Florida, I chanced upon the 
following entry in Ramanujan's 
unpublished papers:   see page 337 of \cite{R}.  
We quote the second half of this page: 

$\phi(x)$ {\sl is the no. of nos of the form} 
$$ 
2^{a_2} \cdot 3^{a_3} \cdot 5^{a_5}\cdots p^{a_p} \qquad p \le x^{\epsilon}
$$
{\sl not exceeding $x$}.  
$$ 
\tfrac 12\le \epsilon \le 1, \qquad \phi(x) \sim x \Big\{ 1 -\int_\epsilon^1 \frac{d\lambda_0}{\lambda_0}\Big\}
$$
$$ 
\tfrac 13\le \epsilon \le \tfrac 12, \qquad \phi(x) \sim x \Big\{ 1-\int_{\epsilon}^1 \frac{d\lambda_0}{\lambda_0} + \int_{\epsilon}^{\frac 12} \frac{d\lambda_1}{\lambda_1} \int_{\lambda_1}^{1-\lambda_1}\frac{d\lambda_0}{\lambda_0}\Big\}
$$
\begin{eqnarray*}
\tfrac 14 \le \epsilon \le \tfrac {1}{3}, \qquad \phi(x)\sim x \Big \{ 1&&- \int_{\epsilon}^1 \frac{d\lambda_0}{\lambda_0} + \int_{\epsilon}^{\frac 12} \frac{d\lambda_1}{\lambda_1} \int_{\lambda_1}^{1-\lambda_1}\frac{d\lambda_0}{\lambda_0} \\
&&- \int_{\epsilon}^{\frac 13} \frac{d\lambda_2}{\lambda_2} \int_{\lambda_2}^{\frac{1-\lambda_2}{2}} 
\frac{d\lambda_1}{\lambda_1} \int_{\lambda_1}^{1-\lambda_1} \frac{d\lambda_0}{\lambda_0}\Big\} \\
\end{eqnarray*}
\begin{eqnarray*}
\tfrac 15 \le \epsilon \le \tfrac 14, \qquad \phi(x) \sim x &&\Big\{ 1- \int_{\epsilon}^1 \frac{d\lambda_0}{\lambda_0} + \int_{\epsilon}^{\frac 12} \frac{d\lambda_1}{\lambda_1} \int_{\lambda_1}^{1-\lambda_1}\frac{d\lambda_0}{\lambda_0} \\
&&- \int_{\epsilon}^{\frac 13} \frac{d\lambda_2}{\lambda_2} \int_{\lambda_2}^{\frac{1-\lambda_2}{2}} 
\frac{d\lambda_1}{\lambda_1} \int_{\lambda_1}^{1-\lambda_1} \frac{d\lambda_0}{\lambda_0}\\
&&+\int_{\epsilon}^{\frac 14}
\frac{d\lambda_3}{\lambda_3} \int_{\lambda_3}^{\frac{1-\lambda_3}{3}} \frac{d\lambda_2}{\lambda_2} 
\int_{\lambda_2}^{\frac{1-\lambda_2}{2}} \frac{d\lambda_1}{\lambda_1} \int_{\lambda_1}^{1-\lambda_1} \frac{d\lambda_0}{\lambda_0}\Big\}\\
\end{eqnarray*}
{\sl and so on.} 

Digressing from the topic of this note, we point out that the first half of this 
page of Ramanujan is also of interest to number theorists.   Here Ramanujan 
observes that for fixed $k$ the number of integers below $x$ with at most 
$k$ prime factors is asymptotically 
$$ 
\frac{x}{\log x} \Big( 1+ \log \log x + \frac{(\log \log x)^{2}}{2!} + \ldots + \frac{(\log \log x)^{k-1}}{(k-1)!}\Big). 
$$ 
He notes that this formula also holds ``when $k$ is infinite."  Ramanujan then 
asks whether the formula holds ``when $k$ is a function of $x$".   This interesting 
question on the uniformity with which such an asymptotic formula 
holds was settled by the work of Sathe \cite{Sa} and Selberg \cite{Se}.  

{\bf Acknowledgments}.  I am grateful to Jeff Lagarias for drawing my attention to this problem.

\bigskip

\noindent
Department of Mathematics, Stanford University, Stanford, CA 94305, USA\\
E-mail: ksound@math.stanford.edu\\

\end{document}